\newtheorem{theorem}{Theorem}
\newtheorem{lemma}[theorem]{Lemma}
\newtheorem{corollary}[theorem]{Corollary}
\theoremstyle{definition}
\newtheorem{remark}[theorem]{Remark}
\newtheorem{conjecture}[theorem]{Conjecture}
\let\OLDthebibliography\thebibliography
\renewcommand\thebibliography[1]{
	\OLDthebibliography{#1}
	\setlength{\parskip}{1pt}
	\setlength{\itemsep}{1pt plus 0.3ex}
}
\numberwithin{equation}{section} 
\numberwithin{theorem}{section}  
\DeclarePairedDelimiter\norm{\lVert}{\rVert}%
\let\oldnorm\norm
\def\norm{\@ifstar{\oldnorm}{\oldnorm*}}
\DeclareMathOperator*{\curl}{curl}
\newcommand{\pa} {\partial}
\newcommand{\om} {\omega}
\newcommand{\Om} {\Omega}
\newcommand{\la} {\lambda}
\newcommand\restr[2]{{
  \left.\kern-\nulldelimiterspace 
  #1 
  \right|_{#2} 
  }}
\def\Ab{{\mathbf{A}}}
\def\Fb{{\mathbf{F}}}
\def\i{{\mathrm{i}}}
\def\FAB{\mathbf{F}^{\rm AB}}
\def\R{{\mathbb R}}
\def\Z{{\mathbb Z}}
\def\({{\Big(}}
\def\){{\Big)}}
\def\d{{\rm d}}
\def\dx{{\rm d}x}
\date{}
\begin{document}
\title[Isoperimetric Inequality with zero magnetic field]{Isoperimetric Inequality with zero magnetic field in doubly connected domains}

\author[M. Ghosh]{Mrityunjoy Ghosh$^\dagger$\orcidlink{0000-0003-0415-2821}}

\address{$^\dagger$Tata Institute of Fundamental Research,
Centre for Applicable
Mathematics\\
Post Bag No. 6503, Sharadanagar,
Bangalore 560065, India.}
\email{ghoshmrityunjoy22@gmail.com}

\author[A. Kachmar]{Ayman Kachmar$^\ddagger$}
\address{$^\ddagger$School of Science and Engineering, The Chinese University of Hong Kong, Shenzhen, Guangdong, 518172,
P.R. China.}
\email{akachmar@cuhk.edu.cn}

\subjclass[2020]{35P15, 35J10, 49R05, 81Q10.}

\keywords{Magnetic Laplacian, Aharonov-Bohm effect, Isoperimetric inequality, Spectral optimization, Eigenvalue bounds}

\begin{abstract}
We investigate how the lowest eigenvalue of a magnetic Laplacian depends on the geometry of a planar domain with a disk shaped hole, where the magnetic field is generated by a singular flux. Under Dirichlet boundary conditions on the inner boundary and Neumann boundary conditions on the outer boundary, we show that this eigenvalue is maximized when the domain is an annulus, for a fixed area and magnetic flux. As consequences, we establish geometric inequalities for eigenvalues in settings with both singular and localized magnetic fields. We also propose a conjecture for a general optimality result and establish its validity for large magnetic fluxes.
\end{abstract}

\maketitle


\section{Introduction}

The spectral optimization of the lowest eigenvalue of differential operators is a fundamental topic in spectral geometry, with the Faber-Krahn inequality for the Dirichlet Laplacian being a paradigmatic result (cf. \cite{Faber, Krahn1925, Krahn1926}). In the magnetic setting, a landmark contribution is due to Erd\"os \cite{Erdos1996}, who established a magnetic analogue of the Faber-Krahn inequality in the plane. Specifically, for the magnetic Laplacian with Dirichlet boundary conditions under a homogeneous magnetic field, the disk was shown to minimize the first eigenvalue among all planar domains of prescribed area. Recently, Ghanta-Junge-Morin \cite{GJM2024} proved a quantitative version of Erd\"os's result.

Fournais and Helffer \cite{FournaisHelffer2019} conjectured a reverse Faber-Krahn inequality for the Neumann Laplacian with a homogeneous magnetic field, specifically for simply connected planar domains of prescribed area. For moderate magnetic field strengths, this conjecture was recently resolved in \cite{CLPS} (see \cite{KachmarLotoreichik} for a quantitative version involving convex domains).

Zero magnetic fields in multiply connected domains are known to generate spectral flux effects. However,  their role in shape optimization remains less explored. In this paper, we prove a reverse Faber-Krahn inequality for doubly connected domains with mixed Neumann-Dirichlet conditions and zero field, partially extending the work of Colbois-Provenzano-Savo \cite{ColbProvenSavo} on point-perforated domains.

\subsection*{Mathematical setup} Consider two bounded and simply connected domains $\om,\Om\subset \R^2$ with $C^1$ smooth boundaries and such that $\overline{\om}\subset \Om$.
Let $\Om_0=\Om\setminus \overline{\om}$ and, for a given $\Phi\in\mathbb R$, consider a vector potential $\Fb_\Phi$ satisfying
\[\curl\Fb_\Phi=0\mbox{ on }\Om_0,\quad \frac1{2\pi}\int_{\partial\omega}\Fb_\Phi\cdot \dx=\Phi. \]
Applying Stoke's theorem formally,
\[ \int_{\partial\om}\Fb_\Phi\cdot \dx=\int_{\om}\curl\Fb_\Phi\,\dx,\]
we can interpret  the circulation $\Phi$ of $\Fb_\Phi$ along $\partial\om$ as the (magnetic) flux across $\om$, and even across $\Om$ since $\curl\Fb_\Phi$ vanishes outside $\om$.

We introduce the eigenvalue
\begin{equation}\label{LaOmo}
    \la^{\Om_0}(\Phi)=\inf_{\substack{u\in H^1(\Om_0)\\u|_{\pa \om}=0}} \frac{\|(-\i\nabla -\Fb_\Phi)u\|^2_{L^2(\Om_0)}}{\|u\|_{L^2(\Om_0)}^2},
\end{equation}
which depends only on the circulation of $\Fb_\Phi$ along $\partial\om$ and not on the specific choice of $\Fb_\Phi$ (see \cite[Theorem~1.1]{HHOO}). Moreover, it is an even  periodic function of $\Phi$ with period $1$. 
That  said, it is no loss of generality to choose $\Fb_\Phi$ as
\begin{equation}\label{eq:def-F}
\Fb_\Phi(x)=\FAB_\Phi(x-x_0)
\end{equation}
where $x_0$ is any  point in $\om$ and $\FAB_\Phi$ is the Aharonov-Bohm vector potential with flux $\Phi$ defined as 
\[\FAB_\Phi(x)=\Phi\FAB_1(x),\quad \FAB_1(x)=\left(\frac{-x_2}{|x|^2},\frac{x_1}{|x|^2}\right),\;x=(x_1,x_2).\]
\subsection*{Main result}
We would like to study the dependence of the eigenvalue $\lambda^{\Om_0}(\Phi)$ on the shape of $\Om_0$ by fixing the area $|\Om_0|$ and the flux $\Phi$ (this has been studied in \cite{ColbProvenSavo} in the case where $\om=\{x_0\}$). Specifically, we introduce the optimal domains
\begin{equation}\label{eq:def-opt-dom}
\Om_0^*=\Om^*\setminus\overline{\om^*},\quad \Om^*=\{x\in\R^2\colon |x|<R_1\},\quad \om^*=\{x\in\R^2\colon |x|<R_0\},
\end{equation}
with the radii $R_0$ and $R_1$ fixed according to
\begin{equation}\label{eq:constraint}
|\Om|=|\Om^*|,\quad |\om|=|\om^*|. 
\end{equation}
For technical reasons, we restrict to a disk shaped hole  and assume
\begin{equation}
    \label{eq:cond-geom}
    \om=\{x\in\R^2\colon |x-x_0|<R_0\}.
\end{equation}
\begin{theorem}\label{theo:FB}
    Suppose that \eqref{eq:constraint} and \eqref{eq:cond-geom} hold. If $\Om_0$ is not an annulus, then we have 
    \begin{equation}
        \la^{\Om_0}(\Phi)< \la^{\Om_0^*}(\Phi),
    \end{equation}
    where $\Om_0^*$ is introduced in \eqref{eq:def-opt-dom}.
\end{theorem}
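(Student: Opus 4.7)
The plan is to transplant the radial ground state of the annulus $\Om_0^*$ to a test function on $\Om_0$ and estimate its Rayleigh quotient. By periodicity and evenness of $\Phi\mapsto\la^{\Om_0}(\Phi)$ it is enough to treat $\Phi\in[0,1/2]$; after translating so that $x_0=0$ we have $\om=B_{R_0}$ and $\Fb_\Phi=\Phi\FAB_1$. Separation of variables on $\Om_0^*$ identifies the ground state as $u^*(x)=f(|x|)$ where $f>0$ on $(R_0,R_1]$ satisfies
\[
-f''-f'/r+\Phi^2 f/r^2=\la^*f \text{ on }(R_0,R_1),\quad f(R_0)=0,\quad f'(R_1)=0,\quad \la^*:=\la^{\Om_0^*}(\Phi);
\]
equivalently $(rf')'=(\Phi^2/r-\la^* r)f$, from which one reads off that $f'\ge 0$ on $[R_0,R_1]$ and $\la^*>\Phi^2/R_1^2$. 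Let $M:=f(R_1)$ and extend $f$ to $\tilde f$ on $[R_0,\infty)$ by $\tilde f(r)=M$ for $r\ge R_1$ (a $C^1$ extension since $f'(R_1)=0$). Take $\tilde u(x):=\tilde f(|x|)$ as test function; then $\tilde u\in H^1(\Om_0)$ vanishes on $\partial\om$, $|(-\i\Gr-\Fb_\Phi)\tilde u|^2=\tilde f'(r)^2+\Phi^2\tilde f(r)^2/r^2$, and the theorem reduces to
\[
\E(\tilde u):=\Int_{\Om_0}\Bigl[\tilde f'(r)^2+\tfrac{\Phi^2}{r^2}\tilde f(r)^2-\la^*\tilde f(r)^2\Bigr]dx\le 0,\qquad r=|x|,
\]
with equality iff $\Om_0=\Om_0^*$.

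Set $h(r):=r[\tilde f'(r)^2+\Phi^2\tilde f(r)^2/r^2-\la^*\tilde f(r)^2]$ and $\psi(\rho):=\int_{R_0}^\rho h(r)\,dr$. The ODE gives $h=(rff')'$ on $[R_0,R_1]$, so $\psi(\rho)=\rho f(\rho)f'(\rho)$ there, with $\psi(R_0)=\psi(R_1)=0$; on $[R_1,\infty)$, $h(r)=(\Phi^2/r-\la^* r)M^2$ integrates explicitly. The analytic heart of the proof is the claim that $\hat\psi(s):=\psi(\sqrt{s+R_0^2})$ is concave on $[0,\infty)$ and strictly concave on $[0,R_1^2-R_0^2]$. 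Using $ds=2r\,dr$ one finds
\[
\hat\psi'(s)=\tfrac12\bigl[\tilde f'(r)^2+\Phi^2\tilde f(r)^2/r^2-\la^*\tilde f(r)^2\bigr],
\]
and differentiating once more and eliminating $f''$ via the ODE yields
\[
4r\,\hat\psi''(s)=-2(f')^2/r-2\Phi^2f^2/r^3+4ff'(\Phi^2/r^2-\la^*)\quad\text{on }[R_0,R_1].
\]
The cross term has the wrong sign when $\Phi^2/r^2>\la^*$, but AM--GM gives $2(f')^2/r+2\Phi^2f^2/r^3\ge 4\Phi ff'/r^2$, and the inequality $4\Phi ff'/r^2\ge 4ff'(\Phi^2/r^2-\la^*)$ reduces to $\Phi(1-\Phi)+\la^* r^2\ge 0$, which holds for $\Phi\in[0,1/2]$ and $\la^*>0$. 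On $[R_1^2-R_0^2,\infty)$, $\hat\psi''(s)=-M^2\Phi^2/(2(s+R_0^2)^2)\le 0$, with $C^1$ matching at $s=R_1^2-R_0^2$.

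To finish, pass to polar coordinates about the origin and write the slice of $\Om_0$ at angle $\theta$ as $I_\theta=\bigsqcup_j(a_j(\theta),b_j(\theta))$, where $a_1=R_0$ for every $\theta$ since $\partial\om$ lies in the interior of $\Om$. Then
\[
\E(\tilde u)=\Int_0^{2\pi}\sum_j[\hat\psi(\beta_j)-\hat\psi(\alpha_j)]\,d\theta,\quad \alpha_j:=a_j^2-R_0^2,\ \beta_j:=b_j^2-R_0^2.
\]
Two steps close the argument: (i) since $\hat\psi'$ is decreasing, the bathtub/rearrangement inequality applied to the disjoint intervals $[\alpha_j,\beta_j]\subset[0,\infty)$ of total length $L_\theta:=\sum_j(b_j^2-a_j^2)$ gives $\sum_j[\hat\psi(\beta_j)-\hat\psi(\alpha_j)]\le\hat\psi(L_\theta)$; (ii) the area constraint $\int_0^{2\pi}L_\theta\,d\theta=2\pi(R_1^2-R_0^2)$ together with Jensen's inequality for the concave $\hat\psi$ gives $\int_0^{2\pi}\hat\psi(L_\theta)d\theta\le 2\pi\hat\psi(R_1^2-R_0^2)=0$. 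Combining, $\E(\tilde u)\le 0$, hence $\la^{\Om_0}(\Phi)\le R(\tilde u)\le\la^*$, and when $\Om_0\ne\Om_0^*$ the strict concavity of $\hat\psi$ on $(0,R_1^2-R_0^2)$ forces strict inequality in at least one of the two steps. The main obstacle is the concavity claim for $\hat\psi$: the cross term must be absorbed by AM--GM, and the bound $\Phi(1-\Phi)+\la^* r^2\ge 0$ is sharp at $\Phi\in\{0,1\}$, closing exactly because the reduction yields $\Phi\in[0,1/2]$.
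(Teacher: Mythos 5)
Your proposal is correct, and it uses the same trial state as the paper (the radial annulus ground state, cut off at $u(R_1)$ outside $B_{R_1}$ and by the Dirichlet condition at $R_0$), but the way you close the argument is genuinely different. The paper never forms the excess functional $\E$: it compares numerator and denominator separately (Lemmas~\ref{lem:proof-thm-2} and \ref{lem:proof-thm-3}), using only that $u$ is increasing (so $f\le u(R_1)$ where mass is added and $u\le u(R_1)$ where it is removed), that the energy density $U(r)=|u'|^2+\Phi^2u^2/r^2$ is decreasing (Lemma~\ref{lem:mono-grad-u}), and that the swapped regions $\Om_0\cap(\Om_0^*)^c$ and $\Om_0^c\cap\Om_0^*$ have equal area; no slicing, rearrangement or Jensen is needed. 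Your route instead integrates the ODE to get $\psi(\rho)=\rho f f'$, proves concavity of $\hat\psi(s)=\psi(\sqrt{s+R_0^2})$, and then runs a per-angle bathtub inequality plus Jensen in $\theta$; note that your concavity computation is, after the substitution $s=r^2-R_0^2$, essentially the paper's Lemma~\ref{lem:mono-grad-u} (your AM--GM absorption of the cross term is the paper's completing of the square $(u-ru')^2$), so the analytic core coincides, while your endgame trades the paper's elementary measure comparison for heavier but self-contained machinery that packages the conclusion as the single inequality $\E(\tilde u)\le 0$. Two points deserve tightening: (i) you assert that $f'\ge 0$ and $\la^*>\Phi^2/R_1^2$ can be ``read off'' from $(rf')'=(\Phi^2/r-\la^* r)f$ --- this is exactly the paper's Lemma~\ref{lem:radial-u} and needs the short argument with $N(r)=rf'(r)$ (sign of $N'$, $N(R_0)>0$, $N(R_1)=0$); moreover your strict-concavity claim on $(0,R_1^2-R_0^2)$, which drives the equality case, requires $f'>0$ (or $\Phi>0$) on the open interval, not merely $f'\ge0$, though the same $N$-argument delivers this; (ii) the final strictness step (``a.e.\ slice equals $(R_0,R_1)$ up to null sets implies $\Om_0=\Om_0^*$'', and the equality cases in bathtub/Jensen when $\Phi=0$, where $\hat\psi$ is affine beyond $R_1^2-R_0^2$) is stated very tersely, but it does go through using the mean constraint on $L_\theta$, strict concavity on $[0,R_1^2-R_0^2]$, and the $C^1$ boundary hypothesis --- comparable in rigor to the paper's own brief equality discussion in Lemma~\ref{lem:proof-thm-2}.
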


\begin{remark}\label{rem:Phi=0}
If $\Phi$ is an integer, then $\lambda^{\Om_0}(\Phi) = \lambda^{\Om_0}(0)$ due to periodicity, reducing the problem to the Laplacian without a magnetic field. The isoperimetric inequality for the Laplacian is well studied under various geometric constraints. For doubly connected domains, we refer to \cite{Hersch1963} and \cite{Payne-Weinberger1961}, where the area of the domain and the perimeter of the Dirichlet boundary component are fixed. We also refer to the recent works \cite{AAK,AM,DP,ABD2023} for various  generalizations of these results in the context of the Laplacian/$p$-Laplacian with mixed boundary conditions. In particular, recently in \cite{ABD2023}, the authors investigated a similar problem for the Laplacian with Robin-Neumann boundary conditions under the assumption that the inner hole is disk shaped.
\end{remark}
\subsection*{Applications}
Thanks to a limiting argument similar to \cite[Eq. (5.11)]{KP}, if  
\[\om=\om_\epsilon:=\{x\in\R^2\colon |x-x_0|<\epsilon\}\] and we take $\epsilon\to0$, we have $\lambda^{\Om_0}(\Phi)\to \lambda^{\Om}(\Phi)$, where 
\[\lambda^{\Om}(\Phi):=\inf_{u\in C_c^\infty(\Omega\setminus\{x_0\})}\frac{\|(-\i\nabla -\Fb_\Phi)u\|^2_{L^2(\Om)}}{\|u\|_{L^2(\Om)}^2}\] 
is the lowest eigenvalue of the Friedrichs extension of the magnetic Laplacian $(-\i\nabla-\Fb_\Phi)^2$ in $L^2(\Om)$. We then obtain as an immediate corollary of Theorem~\ref{theo:FB} the isoperimetric inequality with Aharonov-Bohm vector  potential\footnote{The proof of Theorem~\ref{theo:FB} uses a trial state constructed in the spirit of \cite{ColbProvenSavo}.} proved in \cite{ColbProvenSavo}.
\begin{corollary}\label{corol:FB} It holds
    \[\lambda^{\Om}(\Phi)\leq \lambda^{\Om^*}(\Phi).\]
\end{corollary}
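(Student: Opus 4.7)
The plan is a straightforward exhaustion argument: I would apply Theorem~\ref{theo:FB} to perforated domains with a small disk-shaped hole centered at $x_0$, and then let the hole's radius shrink to zero.

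Concretely, for small $\epsilon>0$, I set $\om_\epsilon=\{x\in\R^2\colon |x-x_0|<\epsilon\}\subset\Om$ and $\om_\epsilon^*=\{x\in\R^2\colon |x|<\epsilon\}\subset\Om^*$. These two disks have the same area $\pi\epsilon^2$, and the pair $(\Om,\om_\epsilon)$ satisfies the hypotheses \eqref{eq:constraint}--\eqref{eq:cond-geom} of Theorem~\ref{theo:FB} with rearrangement $(\Om^*,\om_\epsilon^*)$. Theorem~\ref{theo:FB} then gives
\[
\la^{\Om\setminus\overline{\om_\epsilon}}(\Phi)\leq \la^{\Om^*\setminus\overline{\om_\epsilon^*}}(\Phi).
\]
Sending $\epsilon\to 0^+$ and invoking the limiting identity from \cite[Eq.~(5.11)]{KP} mentioned just before the corollary, namely $\la^{\Om\setminus\overline{\om_\epsilon}}(\Phi)\to\la^\Om(\Phi)$ and the analogous statement for $\Om^*$, produces the desired inequality $\la^\Om(\Phi)\leq\la^{\Om^*}(\Phi)$.

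The only real content sits in this limit, which is cited rather than reproved. The upper half $\limsup_{\epsilon\to 0^+}\la^{\Om\setminus\overline{\om_\epsilon}}(\Phi)\leq\la^\Om(\Phi)$ is immediate: any $u\in C_c^\infty(\Om\setminus\{x_0\})$ is admissible in the perforated problem once $\epsilon<\mathrm{dist}(x_0,\mathrm{supp}\,u)$ and automatically vanishes on $\partial\om_\epsilon$, so its Rayleigh quotient bounds $\la^{\Om\setminus\overline{\om_\epsilon}}(\Phi)$ from above. The reverse direction is the main obstacle; it requires a singular-perturbation/capacity argument showing that the Dirichlet condition on the shrinking inner circle becomes asymptotically free, a fact consistent with the Aharonov--Bohm effect (for non-integer $\Phi$, the form domain of the Friedrichs extension already forces decay at $x_0$). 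Since this convergence is imported from \cite{KP}, the corollary itself reduces to a clean limit passage after the application of Theorem~\ref{theo:FB}.
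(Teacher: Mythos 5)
Your proposal is correct and follows essentially the same route as the paper: apply Theorem~\ref{theo:FB} to the pair $(\Om,\om_\epsilon)$ with the matching disk $\om_\epsilon^*$, then pass to the limit $\epsilon\to 0^+$ using the convergence $\la^{\Om\setminus\overline{\om_\epsilon}}(\Phi)\to\la^{\Om}(\Phi)$ imported from \cite[Eq.~(5.11)]{KP}, exactly as indicated in the discussion preceding the corollary. Your additional remarks on the easy half of the limit and the capacity-type nature of the other half are accurate but not needed, since the paper likewise cites this convergence rather than reproving it.
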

Another corollary of Theorem~\ref{theo:FB} is the isoperimetric inequality with a localized magnetic field. To state this inequality, let us choose a vector potential $\Ab=\Ab_\Phi^{\Om,\om}$ on $\Om$ satisfying
\begin{equation}\label{eq:def-A}
     \frac1{2\pi}\int_\Omega\curl\Ab\,\dx=\Phi\quad\mbox{and}\quad \curl\Ab=b\mathbf 1_{\omega},
\end{equation}
with $\mathbf 1_\om$ the characteristic function of $\omega$ and $b$ a positive constant. The condition on the flux leads to the choice $b=2\pi\Phi/|\omega|$, and it turns out  that fixing $\Phi$ is equivalent to fixing the area of $\omega$ and the field's intensity $b$.

With the vector potential $\Ab_\Phi^{\Om,\om}$ as above, we introduce the eigenvalue
\begin{equation}\label{eq:def-la-loc-f}
    \la (\Phi, \Om, \om)=\inf_{u\in H^1(\Om)\setminus \{0\}} \frac{\|(-\i\nabla -\Ab_\Phi^{\Om,\om})u\|^2_{L^2(\Om)}}{\|u\|_{L^2(\Om)}^2},
\end{equation}
which depends only on $\curl\Ab_\Phi^{\Om,\om}$ by gauge invariance, since the domain $\Om$ is simply connected. Recall that, as $\Phi\to+\infty$, we have by \cite[Theorem~1.4]{KachmarSundqvist2024}
\begin{equation}\label{eq:lim-la-loc-f}
\la (\Phi, \Om, \om)\sim \la^{\Om_0}(\Phi).
\end{equation}
We then have the following corollary of Theorem~\ref{theo:FB}.
\begin{corollary}\label{corol:loc-field}
    Let $\nu\in[0,1)$ and suppose that  \eqref{eq:constraint} and \eqref{eq:cond-geom} hold. There exists $\Phi_0>0$ such that, if $\Phi\geq \Phi_0$ and the fractional part\footnote{This is $\{\Phi\}=\Phi-\lfloor\Phi\rfloor$.} of $\Phi$ is equal to $\nu$, then
    \[ \la(\Phi,\Om,\om)\leq \la(\Phi,\Om^*,\om^*),\]
    where $\Om^*$ and $\om^*$ are as in \eqref{eq:def-opt-dom}.
\end{corollary}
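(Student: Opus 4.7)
The plan is to deduce the inequality by combining Theorem~\ref{theo:FB} with the large-flux asymptotic \eqref{eq:lim-la-loc-f}, exploiting the $1$-periodicity of $\Phi\mapsto\la^{\Om_0}(\Phi)$ noted after \eqref{LaOmo} to absorb the constraint $\{\Phi\}=\nu$.

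First, observe that if $\Om_0$ is already congruent to $\Om_0^*$, which happens exactly when $\Om$ is a disk of radius $R_1$ centered at $x_0$, then by rigid-motion and gauge invariance $\la(\Phi,\Om,\om)=\la(\Phi,\Om^*,\om^*)$ and there is nothing to prove. I therefore assume that $\Om_0$ is not an annulus. Applying Theorem~\ref{theo:FB} at the flux value $\nu\in[0,1)$ then produces the strict gap
\[ \delta:=\la^{\Om_0^*}(\nu)-\la^{\Om_0}(\nu)>0, \]
and the $1$-periodicity of the singular-flux eigenvalue, together with $\{\Phi\}=\nu$, gives $\la^{\Om_0}(\Phi)=\la^{\Om_0}(\nu)$ and $\la^{\Om_0^*}(\Phi)=\la^{\Om_0^*}(\nu)$ for every admissible $\Phi$.

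Next, I invoke \eqref{eq:lim-la-loc-f} separately for the pairs $(\Om,\om)$ and $(\Om^*,\om^*)$ to find $\Phi_0>0$ such that, for all $\Phi\geq\Phi_0$,
\[ \bigl|\la(\Phi,\Om,\om)-\la^{\Om_0}(\nu)\bigr|<\tfrac{\delta}{3},\qquad \bigl|\la(\Phi,\Om^*,\om^*)-\la^{\Om_0^*}(\nu)\bigr|<\tfrac{\delta}{3}. \]
Chaining these two bounds with the strict gap $\delta$ immediately produces $\la(\Phi,\Om,\om)<\la(\Phi,\Om^*,\om^*)$, which is the desired inequality.

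There is no serious technical obstacle, since both ingredients are already available. The conceptual point worth highlighting is that Theorem~\ref{theo:FB} supplies only a strict inequality with no uniform quantitative gap, so one cannot expect the corollary to hold at every $\Phi$; the threshold $\Phi_0$ must depend on the fixed domains $(\Om,\om)$ through both the size of $\delta$ and the speed of convergence in \eqml{eq:lim-la-loc-f}. Sharpening the result into an explicit bound on $\Phi_0$, or removing the largeness assumption entirely, would require a quantitative version of the asymptotic \eqref{eq:lim-la-loc-f} paired with a quantitative refinement of Theorem~\ref{theo:FB}, which is the natural next question.
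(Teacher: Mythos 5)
Your proposal is exactly the argument the paper intends (and leaves implicit): combine the strict inequality of Theorem~\ref{theo:FB} at the flux value $\nu$, via the $1$-periodicity of $\la^{\Om_0}(\cdot)$, with the large-flux asymptotics \eqref{eq:lim-la-loc-f} applied to both $(\Om,\om)$ and $(\Om^*,\om^*)$, and handle the annulus case by translation/gauge invariance. The only point worth noting is that \eqref{eq:lim-la-loc-f} is a ratio asymptotic, but since along $\{\Phi\}=\nu$ the limit $\la^{\Om_0}(\nu)$ is a fixed positive constant, your difference formulation is equivalent, so the proof is correct as written.
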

The constant $\Phi_0$ in Corollary~\ref{corol:loc-field} depends on the fixed domains $\Om,\om$. In that regard, the inequality stated is analogous to the inequalities in \cite{FournaisHelffer2019}. This motivates us to state the following conjecture. 
\begin{conjecture}\label{Conjecture}
    Let $\Phi\in\R$. Suppose that $\Om$ and $\om$ satisfy \eqref{eq:constraint}. Then the following inequality holds
    \begin{equation}
        \la (\Phi, \Om, \om)\leq \la (\Phi, \Om^*, \om^*).
    \end{equation}
    \end{conjecture}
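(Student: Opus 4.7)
\emph{Remarks toward a proof.} Since Corollary~\ref{corol:loc-field} already handles the regime $\Phi\geq\Phi_0$ via the asymptotic \eqref{eq:lim-la-loc-f}, the missing case is that of moderate $\Phi$, where the ground state of the localized-field operator does not concentrate in $\Om_0$ and the reduction to the Aharonov--Bohm problem is unavailable. The plan is to attempt a direct trial-state argument, in the spirit of Theorem~\ref{theo:FB}, but now with a competitor defined on all of $\Om$ rather than only on $\Om_0$. The starting point is to analyze the ground state $\psi^*$ on the optimal domain $\Om^*$: by rotational symmetry and a convenient gauge, it decomposes as $\psi^*(r,\theta)=e^{\i m\theta}\rho(r)$ for an optimal angular momentum $m\in\Z$, with $\rho$ solving an explicit Bessel-type ODE inside $\om^*$ matched to a field-free Bessel equation on $(R_0,R_1)$ under a Neumann condition at $r=R_1$.

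The second step is to construct a competitor $u$ on $\Om$. Under the assumption \eqref{eq:cond-geom} that $\om$ is a disk, one may identify $\om$ with $\om^*$ by a translation $T$; since the localized field pulled back by $T$ coincides (up to gauge) with the localized field of $\Ab_\Phi^{\Om,\om}$ inside $\om$, the choice $u=\psi^*\circ T$ on $\om$ preserves the magnetic Rayleigh quotient there exactly. On $\Om_0$ one would extend $u$ by adapting the trial-state construction behind Theorem~\ref{theo:FB}: a measure-preserving, level-set based rearrangement that pulls $\rho$ back from the annulus $\Om_0^*$ onto $\Om_0$, multiplied by a phase factor implementing the correct winding of $\Ab_\Phi^{\Om,\om}$ around $\om$. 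A Polya--Szego type bound for the magnetic energy on $\Om_0$---the same mechanism powering Theorem~\ref{theo:FB}---would then yield $\la(\Phi,\Om,\om)\leq \la(\Phi,\Om^*,\om^*)$, with equality forcing $\Om_0$ to be an annulus.

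The main obstacle is matching the inner and outer pieces of $u$ along $\pa\om$. The trace of $\psi^*\circ T$ on $\pa\om$ carries a definite phase determined by $\psi^*$, while the extension on $\Om_0$ must simultaneously reproduce this phase, be compatible with the AB-type winding of $\Ab_\Phi^{\Om,\om}$, and preserve the level-set foliation used by the rearrangement. Any discrepancy introduces boundary terms of potentially uncontrolled sign in the Rayleigh-quotient comparison, and it is unclear whether these can be absorbed uniformly in $\Phi$. Overcoming this obstruction likely requires either a more sophisticated rearrangement that handles phase and radial profile jointly, or a fundamentally different route---for instance, a monotonicity argument in $\Phi$ anchored at the trivial equality $\la(0,\Om,\om)=\la(0,\Om^*,\om^*)=0$, converted via a Hellmann--Feynman computation into an inequality for the magnetic moments of the two ground states.
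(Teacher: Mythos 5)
This statement is left open in the paper: it is stated as a conjecture, and only the large-flux regime is established (Corollary~\ref{corol:loc-field}), by combining Theorem~\ref{theo:FB} with the asymptotics \eqref{eq:lim-la-loc-f}. Your text is a proof sketch rather than a proof, and you acknowledge this; but beyond the phase-matching obstruction you name, there are two further concrete problems. First, the conjecture assumes only the area constraints \eqref{eq:constraint}, so $\om$ is an arbitrary simply connected hole, not necessarily a disk; your identification of $\om$ with $\om^*$ by a translation, and the transplantation of $\psi^*$ inside $\om$, already invoke \eqref{eq:cond-geom} and therefore could at best address a special case of the statement. Second, you attribute to Theorem~\ref{theo:FB} a level-set, measure-preserving rearrangement with a P\'olya--Szeg\H{o} type bound, but that is not its mechanism: the paper's proof takes the radial ground state $u$ of the annulus, extends it by $0$ inside $R_0$ and by the constant $u(R_1)$ outside $R_1$, and compares Rayleigh quotients using only the monotonicity facts of Lemmas~\ref{lem:radial-u} and \ref{lem:mono-grad-u} together with the equality of the exchanged areas $|\Om_0\cap(\Om_0^*)^c|=|\Om_0^*\cap\Om_0^c|$. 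That argument hinges on the trial state being constant outside $R_1$ (so its magnetic energy density there is $\Phi^2u(R_1)^2/r^2\le U(R_1)$, with $U$ decreasing) and on the Dirichlet condition on $\partial\om=\partial\om^*$, which removes the hole from the comparison altogether.

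For the localized-field eigenvalue \eqref{eq:def-la-loc-f} at moderate $\Phi$, neither feature survives: the ground state does not vanish on $\partial\om$, the energy inside $\om$ (where $\curl\Ab=b$) must itself be compared between two differently shaped holes, and any extension into $\Om_0$ must carry the correct winding of the gauge while matching the trace on $\partial\om$ — precisely the boundary terms of uncontrolled sign you point out. The Hellmann--Feynman alternative anchored at $\la(0,\Om,\om)=\la(0,\Om^*,\om^*)=0$ faces the same difficulty in a different guise: it requires comparing the flux-derivatives (magnetic moments) of the two ground states at every intermediate $\Phi$, which is information the construction does not provide. So the gap is genuine, and the statement remains, as in the paper, an open conjecture; your sketch correctly locates the difficulty but does not resolve it.
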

We also suspect that the equality holds if and only if $(\Om,\om)=(\Om^*,\om^*)$ modulo a translation.

\subsection*{Organization}
The rest of the article is organized as follows. In the next section (Section~\ref{sec:Main}), we discuss the eigenvalues and the eigenfunctions of the magnetic Laplacian on the annulus with mixed boundary conditions: We prove the existence of a radial ground state and establish 
some monotonicity results associated with it.
Section~\ref{sec:proof_thm} is devoted to the proof of Theorem \ref{theo:FB}. 
Finally, in Section~\ref{sec:Neumann}, we extend Theorem~\ref{theo:FB} to the case of (magnetic) Neumann boundary conditions (see Theorem \ref{theo:Neu}) on both the interior and exterior boundaries.

\section{The case of the annulus}\label{sec:Main}

In this section, we study the case of the annulus $\Omega_0^*$ with inner radius $R_0$ and outer radius 
$R_1$. The eigenvalue $\lambda^{\Omega_0}(\Phi)$ is $1$-periodic in $\Phi$, so we restrict $\Phi$ to the interval $(-1/2, 1/2]$. Furthermore, the unitary transformation $u \mapsto \overline{u}$ implies $\lambda^{\Omega_0^*}(\Phi) = \lambda^{\Omega_0^*}(-\Phi)$, allowing us to further restrict $\Phi$ to $[0, 1/2]$. Under this restriction, we will show that $\lambda^{\Omega_0^*}(\Phi)$ admits a radially symmetric ground state. 

Thanks to the radial symmetry of $\Om_0^*$, we can separate the radial and angular variables. To see this, if a function $u$ is given in polar coordinates $(r,\theta)$ as $u(r,\theta)=f(r)\mathrm{e}^{\i m\theta}$ with $m$ an integer, then
\[
    \|(-\i\nabla-\FAB_\Phi)u\|^2_{L^2(\Om_0^*)}=2\pi q_m(f),\quad
    \|u\|_{L^2(\Om_0^*)}^2=2\pi\int_{R_0}^{R_1}|f(r)|^2r\d r,
\]
where 
\[q_m(f)=\int_{R_0}^{R_1}\Bigl(|f'(r)|^2+V_m(r)|f(r)|^2\Bigr)r\d r,\quad V_m(r)=(\Phi-m)^2/r^2.\]
More generally, we can decompose any given  $u\in L^2(\Om_0^*)$ as a Fourier series in the angular variable,
\[u=\sum_{m\in\Z}u_m(r)\mathrm{e}^{\i m\theta},\]
which in turn yields
\[
    \|(-\i\nabla-\FAB_\Phi)u\|^2_{L^2(\Om_0^*)}=2\pi \sum_{m\in\Z}q_m(u_m),\quad
    \|u\|_{L^2(\Om_0^*)}^2=2\pi\sum_{m\in\Z}\int_{R_0}^{R_1}|u_m(r)|^2r\d r.
\] Consequently, using \eqref{LaOmo},  we get
\[\la^{\Om_0^*}(\Phi)=\inf_{m\in\Z}\mu_m,\]
where $\mu_m$ is the lowest eigenvalue of the operator $\ell_m$ in $L^2((R_0,R_1);r\d r)$ defined as
\[\ell_m=-\frac{\d^2}{\d r^2}-\frac1r\frac{\d}{\d r}+V_m(r)\]
with Dirichlet boundary condition on $r=R_0$ and Neumann boundary condition on $r=R_1$.

\begin{lemma}\label{lem:radial-u}
    There exists a ground state  $u$ of $\la^{\Om_0^*}(\Phi),$ as defined in \eqref{LaOmo}, such that  $u$ is radially symmetric, positive valued and  strictly increasing with respect to the radial variable.
\end{lemma}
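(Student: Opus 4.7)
My plan is to use the angular Fourier decomposition already recorded in the preamble, single out the radial mode as a minimizer, and then analyze the resulting ODE.

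\textbf{Step 1 (reduction to the radial sector).} Starting from the identity
\[
\la^{\Om_0^*}(\Phi)=\inf_{m\in\Z}\mu_m
\]
recorded above, I would show that the infimum is attained at $m=0$. For $\Phi\in[0,1/2]$ and any $m\in\Z$, an elementary case check gives $(\Phi-m)^2\geq \Phi^2$, i.e.\ $V_m(r)\geq V_0(r)$ pointwise on $(R_0,R_1)$. Min-max then yields $\mu_m\geq \mu_0$, so $\la^{\Om_0^*}(\Phi)=\mu_0$, and a ground state can be chosen in the form $u(r,\theta)=f(r)$, with $f$ a ground state of $\ell_0$ on $L^2((R_0,R_1);r\,\d r)$ with Dirichlet condition at $R_0$ and Neumann condition at $R_1$. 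This gives radial symmetry.

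\textbf{Step 2 (positivity).} The operator $\ell_0$ is a regular Sturm--Liouville operator on $[R_0,R_1]$ (note $R_0>0$, so the coefficient $1/r$ is smooth). Standard SL theory gives simplicity of $\mu_0$ and a ground state that does not change sign in the interior, so $f>0$ on $(R_0,R_1]$; nonvanishing at $R_1$ follows from uniqueness for the initial value problem (otherwise $f(R_1)=f'(R_1)=0$ would force $f\equiv 0$).

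\textbf{Step 3 (strict monotonicity in $r$).} This is the main point. Rewrite the eigenvalue equation $-f''-r^{-1}f'+\Phi^2r^{-2}f=\mu_0 f$ in divergence form
\[
-(rf')'+\tfrac{\Phi^2}{r}f=\mu_0 rf,
\]
and set $g(r)=rf'(r)$, so that
\[
g'(r)=\tfrac{f(r)}{r}\bigl(\Phi^2-\mu_0 r^2\bigr).
\]
The two boundary values are $g(R_1)=0$ (Neumann) and $g(R_0)=R_0f'(R_0)>0$ (Hopf's lemma, since $f>0$ in the interior and $f(R_0)=0$). Let $r^*=\Phi/\sqrt{\mu_0}$ (set $r^*=0$ if $\Phi=0$), so that $g'\geq 0$ on $(R_0,r^*)$ and $g'<0$ on $(r^*,R_1)$.

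I would first rule out the case $r^*\geq R_1$: there $g$ would be nondecreasing, contradicting $g(R_0)>0=g(R_1)$. Hence $r^*<R_1$. Then on $(R_0,\min\{r^*,R_1\}]$ we have $g\geq g(R_0)>0$, while on $[\max\{r^*,R_0\},R_1)$ the function $g$ is strictly decreasing toward the value $0=g(R_1)$, so $g>0$ on this interval (otherwise $g(R_1)$ would be strictly negative). Combining, $g>0$ on $(R_0,R_1)$, i.e.\ $f'>0$ on $(R_0,R_1)$, as required.

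\textbf{Main obstacle.} The delicate step is Step 3: one must exclude the possibility that $g=rf'$ dips below zero between $R_0$ and $R_1$. The trick that makes it work is the monotonicity picture of $g'$ implied by its single sign change at $r^*$, together with the boundary information $g(R_0)>0$ coming from Hopf's lemma and $g(R_1)=0$ from the Neumann condition; both ingredients are essential.
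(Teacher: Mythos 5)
Your proposal is correct and follows essentially the same route as the paper: the same Fourier reduction to the $m=0$ mode via $V_m\geq V_0$ for $\Phi\in[0,1/2]$, positivity from Sturm--Liouville theory, and strict monotonicity by analyzing $N(r)=ru'(r)$ with $N(R_1)=0$, $N(R_0)>0$, and the single sign change of $N'$ at $r_*=\Phi/\sqrt{\mu_0}$. Your reorganization of the case analysis (first excluding $r_*\geq R_1$, then treating the rest uniformly) and the explicit appeal to Hopf's lemma for $u'(R_0)>0$ are only cosmetic differences from the paper's argument.
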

\begin{proof}
We can express $\mu_m$ in the variational form
\[\mu_m=\inf\{q_m(f)\colon f(R_0)=0,\|f\|_{L^2((R_0,R_1);r\d r)}=1\}.\]
Knowing that $\Phi\in[0,1/2]$, we have $V_m(r)\geq V_0(r)$ and consequently $\inf_{m\in\Z}\mu_m=\mu_0$. This proves that $\la^{\Om_0}(\Phi)=\mu_0$ with a corresponding ground state $f(|x|)$, where $f$ is a ground state of $\ell_0$. Since $q_0(f)\geq q_0(|f|)$, we get that $u=|f|$ is also a ground state of $\ell_0$, and hence a radial non-negative ground state of $\la^{\Om_0^*}(\Phi)$. By Sturm-Liouville theory, $u$ is positive on $(R_0,R_1)$.

    It remains to prove that $u$ is strictly increasing, that is $u'> 0$ on $(R_0,R_1)$. Notice that $u$ satisfies
    \begin{equation}\label{eq:radial-u}
    \left\{\begin{aligned}
        -u''-\frac{1}{r}u'+\frac{\Phi^2}{r^2}u &=\mu_0 u\;\;\text{in}\; (R_0,R_1),\\
        u'(R_1)=0,\; & u(R_0)=0.
    \end{aligned}\right.
    \end{equation}
As in \cite[Proof of Theorem~28]{ColbProvenSavo}, we prove that  $N(r):=ru'(r)$ is positive. Notice that $N(R_1)=0$ and that
\[N'(r)=\left(\frac{\Phi^2}{r^2}-\mu_0\right)ru(r)\quad\mbox{on }(R_0,R_1).\]
Since $u$ is positive on $(R_0,R_1)$ and $u(R_0)=0$, we have $u'(R_0)>0$ and hence $N(R_0)>0$. 

We introduce $r_*=\Phi/\sqrt{\mu_0}$ and consider two cases:
\begin{itemize}
    \item Case I: $r_*\in(R_0,R_1)$:
    \item Case II: $r_*\not\in(R_0,R_1)$.
\end{itemize}
Consequently, 
$r\in(R_0,R_1)$ is a critical point of $N$ if and only if $r=r_*$ and $r_*=\Phi/\sqrt{\mu_0}\in(R_0,R_1)$.

We claim that, in Case II, we have $N(r)>0$ on $(R_0,R_1)$. In fact, in Case II, $N$ does not have critical points in $(R_0,R_1)$ and consequently $N'$ is either positive on negative on $(R_0,R_1)$. If $N'$ were positive, then $N(R_0)\leq N(R_1)=0$ which violates $N(R_0)>0$. Therefore, in Case II, $N'<0$  and we have $N(r)>N(R_1)=0$. 

Now we claim that in Case I, we also have $N(r)>0$ on $(R_0,R_1)$. In fact, studying the sign of $N'$, we get that $N(r)$  increases from $N(R_0)>0$ to $N(r_*)$, and then it decreases from $N(r_*)$ to $N(R_1)=0$. 
\end{proof}

Next, we state a technical lemma that is needed to prove our main result (see \cite[Theorem~29]{ColbProvenSavo}).
\begin{lemma}\label{lem:mono-grad-u}    Assume that $\Phi\in [0,1/2]$. Suppose $u$ is a (real) radial ground state solution for $\la^{\Om_0^*}(\Phi).$ Define 
    \[U(r)=|u'(r)|^2+\frac{\Phi^2}{r^2}u(r)^2,\;\text{ for }\;r\in (R_0,R_1).\]
    Then $U$ is strictly decreasing on $(R_0,R_1).$
\end{lemma}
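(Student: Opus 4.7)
My approach is to differentiate $U$ directly, substitute for $u''$ using the radial ODE \eqref{eq:radial-u} satisfied by $u$, and then rearrange the outcome into a manifestly signed expression with the help of the positivity statements already established in Lemma~\ref{lem:radial-u}.

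Differentiating gives
\[
U'(r) = 2u'(r)u''(r) + \frac{2\Phi^2}{r^2}u(r)u'(r) - \frac{2\Phi^2}{r^3}u(r)^2,
\]
and inserting $u'' = -\frac{1}{r}u' + \frac{\Phi^2}{r^2}u - \mu_0 u$ from \eqref{eq:radial-u}, then factoring out $-2/r$, I expect to reach
\[
-\frac{r}{2}\,U'(r) = \bigl(u'(r)\bigr)^2 - \frac{2\Phi^2}{r}u(r)u'(r) + \frac{\Phi^2}{r^2}u(r)^2 + r\mu_0\,u(r)u'(r).
\]

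The decisive step is to complete the square on the first three terms,
\[
\bigl(u'\bigr)^2 - \frac{2\Phi^2}{r}uu' + \frac{\Phi^2}{r^2}u^2 = \left(u' - \frac{\Phi^2}{r}u\right)^2 + \frac{\Phi^2(1-\Phi^2)}{r^2}u^2,
\]
which is non-negative because $\Phi\in[0,1/2]$ gives $1-\Phi^2\geq 3/4>0$. The residual term $r\mu_0\,uu'$ is strictly positive on $(R_0,R_1)$: both $u$ and $u'$ are positive there by Lemma~\ref{lem:radial-u}, and $\mu_0 = \lambda^{\Om_0^*}(\Phi) > 0$ since the Dirichlet condition at $r=R_0$ rules out a zero eigenfunction of $\ell_0$. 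Summing these contributions yields $-\frac{r}{2}U'(r) > 0$, equivalently $U'(r) < 0$ on $(R_0,R_1)$, which is precisely the desired strict monotonicity.

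The main potential obstacle will be handling the cross term $-\frac{2\Phi^2}{r}uu'$: the quadratic form in $(u',u)$ appearing in the bracket has discriminant proportional to $\Phi^2-1$, so positive definiteness after completing the square relies crucially on $\Phi^2<1$. For $\Phi\in[0,1/2]$ this is automatic and the argument closes cleanly; for larger values of $\Phi$ one would have to lean on the correction $r\mu_0 uu'$, which is not readily available pointwise without lower bounds on $u$ and $u'$. Thus the reduction of $\Phi$ to $[0,1/2]$ performed at the beginning of Section~\ref{sec:Main} is precisely what is needed at this step.
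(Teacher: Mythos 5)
Your proposal is correct and follows essentially the same route as the paper: differentiate $U$, substitute $u''$ from \eqref{eq:radial-u}, and use $u>0$, $u'>0$ from Lemma~\ref{lem:radial-u} together with $\Phi^2<1$ to sign the resulting expression via a completed square. The only cosmetic difference is that the paper discards the eigenvalue term $-2\mu_0 uu'\le 0$ and writes the remainder as $-\tfrac{2}{r}(1-\Phi^2)(u')^2-\tfrac{2\Phi^2}{r^3}(u-ru')^2$, getting strictness from $u'>0$, whereas you keep $r\mu_0 uu'$ (with a correct justification that $\mu_0>0$) and complete the square around $u'-\tfrac{\Phi^2}{r}u$ instead.
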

\begin{proof}
It is sufficient to showing that $U'< 0$ in $(R_0,R_1)$. Indeed, with the help of \eqref{eq:radial-u} and using the fact that $u> 0,\;u'> 0$ on $(R_0,R_1)$ (see Lemma \ref{lem:radial-u}),  it is easy to observe that
\begin{align}
    U'(r) &=2u'(r)u''(r)-\frac{2\Phi^2}{r^3}u(r)^2+\frac{2\Phi^2}{r^2}u(r)u'(r)\\
    &= -\frac{2}{r}(u')^2-\underbrace{2\la^{\Om_0^*}(\Phi) u(r)u'(r)}_{\geq 0}-\frac{2\Phi^2}{r^3}u(r)^2+\frac{4\Phi^2}{r^2}u(r)u'(r)\\
    &\leq-\frac{2}{r}(u')^2 -\frac{2\Phi^2}{r^3}u(r)^2+\frac{4\Phi^2}{r^2}u(r)u'(r)\\
    &= -2\frac{u'(r)^2}{r}(1-\Phi^2)-\frac{2\Phi^2}{r^3}\left(u(r)-ru'(r)\right)^2  \\
    &\leq -2\frac{u'(r)^2}{r}(1-\Phi^2).
\end{align}
Since $\Phi\in [0,\frac{1}{2}],$ we have $1-\Phi^2> 0$ and hence from the above estimate,  it follows that $U'(r)< 0.$
\end{proof}

\section{Proof of Theorem \ref{theo:FB}}\label{sec:proof_thm}

Suppose that $\Omega_0$ is not an annulus and that \eqref{eq:constraint} and \eqref{eq:cond-geom} hold. Without loss of generality, we assume that $x_0=\mathbf 0$ and consequently $\om=\om^*$. Due to the even symmetry and periodicity of $\la^{\Om_0}(\Phi)$ with respect to $\Phi$, we may restrict to $\Phi\in [0,1/2]$, which we assume henceforth.

Let $u$ be the (real) radial ground state of $\la^{\Om_0^*}(\Phi)$ as  in Lemma \ref{lem:radial-u}. We introduce the radial variable $r=|x|$ and with slight abuse of notation we write $u(x)=u(r)$. Define the function $f:\R^2\rightarrow\R$ by 
\begin{equation}\label{eq:def-trial-state}
    f(r)=\begin{cases}
        0 &\text{if}\;0\leq r\leq R_0,\\
        u(r) &\text{if}\;R_0<r<R_1,\\
        u(R_1) &\text{if}\;r\geq R_1.
    \end{cases}
\end{equation}
We can use $f$ as a trial state in the variational characterization of $\la^{\Om_0}(\Phi)$ in \eqref{LaOmo} since $\om=\om^*$ and $f$  satisfies the Dirichlet condition on $\partial\om$. 
\begin{lemma}\label{lem:proof-thm-1}
    Let $f$ be as in \eqref{eq:def-trial-state}. Then,
    \[\la^{\Om_0}(\Phi)\leq \frac{\|(-\i\nabla -\FAB_{\Phi})f\|^2_{L^2(\Om_0)}}{\|f\|_{L^2(\Om_0)}^2}.\]
\end{lemma}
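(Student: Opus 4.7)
The statement is simply that the piecewise defined function $f$ is an admissible trial function in the Rayleigh quotient characterization \eqref{LaOmo} of $\lambda^{\Om_0}(\Phi)$. So the plan is a short admissibility check followed by an appeal to the variational principle.

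First, I would verify that $f\in H^1(\Om_0)$. The ground state $u$ of $\la^{\Om_0^*}(\Phi)$ is in $H^1(\Om_0^*)$ and, being a solution of the regular Sturm-Liouville problem \eqref{eq:radial-u}, is continuous on $[R_0,R_1]$ with $u(R_0)=0$. Thus the two extensions used to define $f$ (by $0$ on $\{|x|\le R_0\}$ and by the constant $u(R_1)$ on $\{|x|\ge R_1\}$) match continuously across the circles $\pa\om^*$ and $\pa\Om^*$. Since $u'\in L^2((R_0,R_1);r\,dr)$ and the outer extension has zero derivative, the weak gradient of $f$ on $\R^2$ is $f'(r)\hat r$ with $f'\in L^2(\Om_0)$, so in particular $f\in H^1(\Om_0)$.

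Next, the boundary condition on $\pa\om$. Because we normalized $x_0=\mathbf 0$ and \eqref{eq:constraint}, \eqref{eq:cond-geom} force $\om=\om^*=\{|x|<R_0\}$, the function $f$ is identically zero on $\overline{\om}$, so in particular $f|_{\pa\om}=0$. Moreover $f\not\equiv 0$ on $\Om_0$: by Lemma~\ref{lem:radial-u} the ground state $u$ is strictly positive on $(R_0,R_1)$, and $\Om_0$ must intersect $\{R_0<|x|<R_1\}$ (or $\{|x|\ge R_1\}$, where $f=u(R_1)>0$) on a set of positive measure, since $|\Om_0|=|\Om_0^*|$ by \eqref{eq:constraint} and $\Om_0\neq \Om_0^*$.

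Then I would check that the magnetic Dirichlet integral is finite. The Aharonov-Bohm potential $\FAB_\Phi$ is smooth and bounded outside any neighborhood of the origin, and $f$ vanishes on $\{|x|\le R_0\}$, so $\FAB_\Phi f\in L^2(\Om_0)$. Combined with $\nabla f\in L^2(\Om_0)$ from the previous step, this gives $(-\i\nabla -\FAB_\Phi)f\in L^2(\Om_0)$. With all admissibility conditions in place, the inequality is immediate from the variational definition \eqref{LaOmo}.

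There is no real obstacle at this stage; the lemma is purely bookkeeping to license the trial state. The genuine work, which I expect to occupy the rest of Section~\ref{sec:proof_thm}, is to estimate the right-hand side by $\la^{\Om_0^*}(\Phi)$ and to obtain strict inequality when $\Om_0\neq \Om_0^*$. That will presumably involve rewriting the Rayleigh quotient of $f$ using the coarea formula in the radial variable, exploiting the monotonicity of $U(r)=|u'|^2+\Phi^2 u^2/r^2$ from Lemma~\ref{lem:mono-grad-u}, and comparing with the corresponding integral on $\Om_0^*$ via the isoperimetric inequality on the level sets of $|x|$.
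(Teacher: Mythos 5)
Your proposal is correct and matches the paper's (implicit) argument: the paper justifies Lemma~\ref{lem:proof-thm-1} simply by noting that, since $\om=\om^*$, the function $f$ lies in $H^1(\Om_0)$, vanishes on $\pa\om$, and is therefore admissible in \eqref{LaOmo}, which is exactly the admissibility check you carry out in more detail. Your extra verifications (continuity across $r=R_0,R_1$, boundedness of $\FAB_\Phi$ on $\Om_0$, and $f\not\equiv 0$, which in fact holds because $f>0$ everywhere on $\Om_0\subset\{|x|>R_0\}$) are all sound.
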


Next we estimate the $L^2$ norm of $f$.
\begin{lemma}\label{lem:proof-thm-2}
    Let $f$ be as in \eqref{eq:def-trial-state}. Then,
    \[\|f\|_{L^2(\Om_0)}^2\geq\|u\|_{L^2(\Om_0^*)}^2.\]
    Moreover, the above inequality is strict if $\Om_0$ is not an annulus.
\end{lemma}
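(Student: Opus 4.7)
The plan is to exploit the equal-area constraint $|\Om|=|\Om^*|$ together with $\om=\om^*$ (after the translation $x_0\to\mathbf 0$ performed at the outset of the proof of Theorem~\ref{theo:FB}) to reduce the comparison to a direct pointwise estimate on a single region. Writing $D_1=\Om^*=B(\mathbf 0,R_1)$, I would first decompose both annular-type regions relative to $D_1$: since $\overline{\om^*}\subset\Om\cap D_1$, one has $\Om_0=\bigl((\Om\cap D_1)\setminus\overline{\om^*}\bigr)\cup(\Om\setminus D_1)$ and $\Om_0^*=\bigl((\Om\cap D_1)\setminus\overline{\om^*}\bigr)\cup(D_1\setminus\Om)$. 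On the common piece $f$ equals $u(|\cdot|)$, so those contributions cancel in the difference of norms; on $\Om\setminus D_1$ the trial state is the constant $u(R_1)$; and $D_1\setminus\Om$ is absent from $\|f\|_{L^2(\Om_0)}^2$ but integrated in $\|u\|_{L^2(\Om_0^*)}^2$. This produces the identity
\begin{equation*}
\|f\|_{L^2(\Om_0)}^2-\|u\|_{L^2(\Om_0^*)}^2=u(R_1)^2\,|\Om\setminus D_1|-\int_{D_1\setminus\Om}u(|x|)^2\,\dx.
\end{equation*}

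Next, the area constraint immediately yields $|\Om\setminus D_1|=|D_1\setminus\Om|$ by additivity, so the right-hand side rewrites as $\int_{D_1\setminus\Om}\bigl(u(R_1)^2-u(|x|)^2\bigr)\,\dx$. For every $x\in D_1\setminus\Om$ one has $|x|<R_1$ (since $x\in D_1$) and $|x|>R_0$ (since $x\notin\Om\supset\overline{\om^*}$), so the strict monotonicity $u'>0$ on $(R_0,R_1)$ from Lemma~\ref{lem:radial-u} makes the integrand strictly positive pointwise. This already delivers the non-strict inequality $\|f\|_{L^2(\Om_0)}^2\geq\|u\|_{L^2(\Om_0^*)}^2$.

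For strictness, it suffices to show that $\Om_0$ non-annular forces $|D_1\setminus\Om|>0$. Since $\om=\om^*=B(\mathbf 0,R_0)$ is fixed and centered at the origin, the inner boundary $\partial\om^*$ of any annular $\Om_0$ forces $\Om$ to be a disk concentric with $\om^*$; the area constraint then pins $\Om=\Om^*=D_1$. Consequently, $\Om_0\neq\Om_0^*$ implies $\Om\neq D_1$, and by the assumed $C^1$-regularity of $\partial\Om$ this translates into $|D_1\setminus\Om|>0$, making the integral strictly positive. The only genuinely delicate point in the plan is this geometric bookkeeping—in particular the inclusion $\overline{\om^*}\subset\Om$, which is what allows the lower bound $|x|>R_0$ on $D_1\setminus\Om$ and hence the application of Lemma~\ref{lem:radial-u}; beyond that, the argument is a single pointwise comparison.
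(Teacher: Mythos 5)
Your proposal is correct and follows essentially the same route as the paper: cancel the contribution of the common region $\Om_0\cap\Om_0^*$, use $f\equiv u(R_1)$ on $\Om\setminus\Om^*$ together with the monotonicity $u<u(R_1)$ from Lemma~\ref{lem:radial-u}, and invoke the area constraint $|\Om\setminus\Om^*|=|\Om^*\setminus\Om|$ to compare the leftover pieces. Your version merely packages this as an exact identity for the difference of the two norms and is, if anything, slightly more explicit than the paper on the strictness step, where $\Om\neq\Om^*$ must be upgraded to $|\Om^*\setminus\Om|>0$ (via the $C^1$ regularity of $\partial\Om$).
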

\begin{proof}
    Let $(\Om^*_0)^c$ denote the complement of $\Om_0^*$. We decompose the integral over $\Om$ as
    \[\|f\|_{L^2(\Om_0)}^2=\int_{\Om^*_0\cap \Om_0}f^2\d x+\int_{(\Om^*_0)^c\cap\Om_0}f^2\d x.\]
     Since $f$ vanishes on $\om^*$ and $f=u(R_1)$ on $(\Om^*)^c$, we get
    \[\|f\|_{L^2(\Om_0)}^2=\int_{\Om^*_0\cap \Om}u^2\d x+u(R_1)^2|(\Om^*_0)^c\cap\Om|.\]
    Similarly, we decompose the integral over $\Om_0^*$ and write,
    \[\|u\|_{L^2(\Om^*_0)}^2=\int_{\Om_0\cap\Om_0^*}u^2\d x+\int_{\Om^c_0\cap\Om^*_0}u^2\d x.\]
Knowing that $u\leq u(R_1)$ by Lemma~\ref{lem:radial-u} and that $\Om^*_0\subset\Om^*$, we get
\[\int_{\Om^c_0\cap\Om^*_0}u^2\d x\leq u(R_1)^2|\Om^c_0\cap \Om^*|.\] To finish the proof, we notice that by 
 \eqref{eq:constraint} we have  $|\Om_0\cap(\Om^*_0)^c|=|\Om^*_0\cap\Om^c_0| $. Furthermore, if $\Om_0$ is not the annulus (i.e., equivalently $\Om\neq \Om^*$), then we have $u<u(R_1)$ in $\Om^c_0\cap \Om_0^*$ in view of Lemma \ref{lem:radial-u}. Consequently, we get the strict inequality in this case. 
\end{proof}
Now we proceed in estimating the $L^2$ norm of the magnetic gradient of $f$.
\begin{lemma}\label{lem:proof-thm-3}
    Let $f$ be as in \eqref{eq:def-trial-state}. Then,
    \[\|(-\i\nabla -\FAB_{\Phi})f\|^2_{L^2(\Om_0)}\leq \int_{\Om_0^*} \left(|\nabla u|^2+\frac{\Phi^2}{r^2}u^2\right)\dx.\] 
\end{lemma}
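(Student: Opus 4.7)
The plan is to compute the magnetic Dirichlet energy of the trial state $f$ piece by piece, identify the pieces that match the target integral over $\Om_0^*$ exactly, and then use the monotonicity of the radial quantity $U$ from Lemma~\ref{lem:mono-grad-u} together with the area constraint \eqref{eq:constraint} to control the remaining pieces.

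\textbf{Step 1 (pointwise formula).} Since $u$ is real and radial, and the Aharonov--Bohm potential $\FAB_\Phi$ is tangential with $|\FAB_\Phi(x)|^2=\Phi^2/r^2$, a direct computation gives, for any real radial $g(r)$,
\[|(-\i\nabla-\FAB_\Phi)g|^2=|g'(r)|^2+\frac{\Phi^2}{r^2}g(r)^2.\]
Applying this on $\Om_0\cap\Om_0^*$ (where $f=u$) and on $\Om_0\cap(\Om_0^*)^c=\Om\cap\{|x|\geq R_1\}$ (where $f$ equals the constant $u(R_1)$, so $\nabla f=0$), I would write
\[\|(-\i\nabla-\FAB_\Phi)f\|^2_{L^2(\Om_0)}=\int_{\Om_0\cap\Om_0^*}\!\Bigl(|u'|^2+\tfrac{\Phi^2}{r^2}u^2\Bigr)\dx+\int_{\Om\cap\{r\geq R_1\}}\!\frac{\Phi^2}{r^2}u(R_1)^2\dx.\]

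\textbf{Step 2 (matching decomposition).} Splitting the right-hand side of the claimed inequality analogously,
\[\int_{\Om_0^*}\!\Bigl(|u'|^2+\tfrac{\Phi^2}{r^2}u^2\Bigr)\dx=\int_{\Om_0^*\cap\Om_0}\!\Bigl(|u'|^2+\tfrac{\Phi^2}{r^2}u^2\Bigr)\dx+\int_{\Om_0^*\cap\Om^c}\!U(r)\dx,\]
where $U$ is the function from Lemma~\ref{lem:mono-grad-u}. The shared piece $\Om_0\cap\Om_0^*$ cancels, so it suffices to prove
\[\int_{\Om\cap\{r\geq R_1\}}\frac{\Phi^2}{r^2}u(R_1)^2\dx\leq \int_{\Om_0^*\cap\Om^c}U(r)\dx.\]

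\textbf{Step 3 (monotonicity and area balance).} Using the Neumann condition $u'(R_1)=0$, the boundary value is $U(R_1)=\Phi^2 u(R_1)^2/R_1^2$. On the left-hand domain, $r\geq R_1$ implies
\[\frac{\Phi^2u(R_1)^2}{r^2}\leq \frac{\Phi^2u(R_1)^2}{R_1^2}=U(R_1),\]
while on the right-hand domain, $R_0<r<R_1$ and Lemma~\ref{lem:mono-grad-u} yield $U(r)\geq U(R_1)$. Therefore both integrands are comparable to the single constant $U(R_1)$, and the inequality reduces to $U(R_1)\,|\Om\cap\{r\geq R_1\}|\leq U(R_1)\,|\Om_0^*\cap\Om^c|$. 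Finally, the area constraint \eqref{eq:constraint} together with $\om=\om^*\subset\Om$ gives
\[|\Om\cap\{r\geq R_1\}|=|\Om|-|\Om\cap\Om^*|=|\Om^*|-|\Om\cap\Om^*|=|\Om^*\cap\Om^c|=|\Om_0^*\cap\Om^c|,\]
which closes the argument.

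The main conceptual step is Step~3: one has to recognize that $U(R_1)$ is the right reference value that simultaneously bounds the exterior integrand from above (via the explicit $1/r^2$ decay together with $u'(R_1)=0$) and is bounded by the interior integrand (via the monotonicity Lemma~\ref{lem:mono-grad-u}, which is precisely where the restriction $\Phi\in[0,1/2]$ is used). Everything else is a matter of careful bookkeeping of the domain decomposition and of the area equality forced by \eqref{eq:constraint}.
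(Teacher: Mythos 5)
Your proposal is correct and follows essentially the same route as the paper: the same decomposition of both energies over $\Om_0\cap\Om_0^*$ and its complements, the same use of the monotonicity of $U$ from Lemma~\ref{lem:mono-grad-u} to bound the integrand on $\Om_0^*\cap\Om_0^c$ below by $U(R_1)=\Phi^2u(R_1)^2/R_1^2$, and the same area identity coming from \eqref{eq:constraint} with $\om=\om^*$. Your Step 3 merely spells out explicitly the comparison $\Phi^2u(R_1)^2/r^2\le U(R_1)$ for $r\ge R_1$ and the equality of the two residual areas, which the paper leaves implicit in its closing line.
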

\begin{proof}
Since $f$ is radial, we have
\[
\begin{aligned}
    \|(-\i\nabla -\FAB_{\Phi})f\|^2_{L^2(\Om_0)}&=\int_{\Om_0} \left(|\nabla f|^2+\frac{\Phi^2}{r^2}f^2\right)\dx\\
    &=\int_{\Om^*_0\cap\Om_0} \left(|\nabla f|^2+\frac{\Phi^2}{r^2}f^2\right)\dx+\int_{(\Om^*_0)^c\cap \Om_0} \left(|\nabla f|^2+\frac{\Phi^2}{r^2}f^2\right)\dx\\
    &=\int_{\Om^*_0\cap\Om_0} \left(|\nabla u|^2+\frac{\Phi^2}{r^2}u^2\right)\dx+\int_{(\Om^*_0)^c\cap \Om_0} \frac{\Phi^2}{r^2}u(R_1)^2\dx.
\end{aligned}\]
In a similar fashion, we write
\[
\int_{\Om_0^*} \left(|\nabla u|^2+\frac{\Phi^2}{r^2}u^2\right)\dx=
\int_{\Om_0\cap \Om_0^*} \left(|\nabla u|^2+\frac{\Phi^2}{r^2}u^2\right)\dx+\int_{\Om^c_0\cap\Om_0^*}\left(|\nabla u|^2+\frac{\Phi^2}{r^2}u^2\right)\dx.
\]
Notice that
\[|\nabla u|^2+\frac{\Phi^2}{r^2}u^2=U(r),\]
where $U$ is the  decreasing function introduced in Lemma~\ref{lem:mono-grad-u}. Thus
\[|\nabla u|^2+\frac{\Phi^2}{r^2}u^2\geq U(R_1)=\frac{\Phi^2}{R_1^2}u(R_1)^2\mbox{ for }R_0<r<R_1. \]
Consequently,
\[\int_{\Om^c_0\cap\Om_0^*}\left(|\nabla u|^2+\frac{\Phi^2}{r^2}u^2\right)\dx\geq |\Om^c_0\cap\Om_0^*|\frac{\Phi^2}{R_1^2}u(R_1)^2,\]
which finishes the proof.
\end{proof}

\begin{proof}[Proof of Theorem \ref{theo:FB}]
Collecting Lemmas~\ref{lem:proof-thm-1}, \ref{lem:proof-thm-2} and \ref{lem:proof-thm-3}, we get
\[\lambda^{\Om_0}(\Phi)\leq \frac{\|(-\i\nabla-\FAB_\Phi)u\|_{L^2(\Om_0^*)}^2}{\|u\|_{L^2(\Om_0^*)}^2}=\lambda^{\Om_0^*}(\Phi).\]
It remains to notice that if $\Om_0$ is not an annulus, then one has the strict inequality in Lemma \ref{lem:proof-thm-2} and we get $\lambda^{\Om_0}(\Phi)<\lambda^{\Om_0^*}(\Phi).$ 
\end{proof}

\section{The Neumann problem}\label{sec:Neumann}
This section is devoted to the proof of similar results for the (magnetic) Neumann problem on doubly connected domains, with disk shaped holes.
Specifically, we introduce the lowest Neumann eigenvalue
\begin{equation}\label{LaOmo_N}
    \la^{N,\Om_0}(\Phi)=\inf_{\substack{u\in H^1(\Om_0)}} \frac{\|(-\i\nabla -\Fb_\Phi)u\|^2_{L^2(\Om_0)}}{\|u\|_{L^2(\Om_0)}^2}.
\end{equation}

We begin with showing the existence of a (real) radial ground state solution and establish the monotonicity of it, which is analogous to Lemma \ref{lem:radial-u}. As earlier, we assume $\Phi\in (0,1/2]$.

\begin{lemma}\label{lem:radial-u-N}
    Let $\Om_0^*$ be as given in \eqref{eq:def-opt-dom} and $\la^{N,\Om_0^*}(\Phi)$ be as defined in \eqref{LaOmo_N}. Then there exists a (real) ground state  $u$ of $\la^{N,\Om_0^*}(\Phi)$ such that  $u$ is positive, radially symmetric, and  strictly increasing with respect to the radial variable.
\end{lemma}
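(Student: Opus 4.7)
The plan is to adapt the argument of Lemma~\ref{lem:radial-u} to the Neumann setting. First, I would perform the same Fourier decomposition $u=\sum_{m\in\Z} u_m(r)\mathrm{e}^{\i m\theta}$, which reduces $\lambda^{N,\Om_0^*}(\Phi)$ to $\inf_{m\in\Z}\mu_m^N$, where $\mu_m^N$ is now the lowest eigenvalue of $\ell_m$ on $(R_0,R_1)$ with Neumann conditions at \emph{both} endpoints. Since $\Phi\in(0,1/2]$, the pointwise inequality $V_m\geq V_0$ still holds, so $\mu_0^N=\inf_m\mu_m^N$ is attained in the radial sector, yielding a real ground state of the form $f(|x|)$. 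Replacing $f$ by $|f|$ gives a non-negative radial ground state $u$; standard Sturm--Liouville theory for the regular problem on $[R_0,R_1]$ shows $u>0$ on the open interval, and at either endpoint $R_i$ the combination $u(R_i)=0$ together with the Neumann condition $u'(R_i)=0$ would force $u\equiv 0$ by ODE uniqueness, so in fact $u>0$ on the whole closed interval $[R_0,R_1]$.

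For monotonicity I would again introduce $N(r):=ru'(r)$. The radial equation
\[
-u''-\frac{1}{r}u'+\frac{\Phi^2}{r^2}u=\mu_0^N u
\]
yields $N'(r)=\bigl(\Phi^2/r^2-\mu_0^N\bigr)\,r\,u(r)$, so (since $u>0$) the only possible interior critical point of $N$ is $r_*=\Phi/\sqrt{\mu_0^N}$. The new feature compared with Lemma~\ref{lem:radial-u} is that the Neumann condition now holds on \emph{both} boundaries, so $N(R_0)=N(R_1)=0$, and the desired positivity $N>0$ on $(R_0,R_1)$ cannot be read off directly from a nonzero boundary value as before.

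The main obstacle is therefore to show that $r_*$ actually lies in $(R_0,R_1)$. I would argue by contradiction: if $r_*\notin(R_0,R_1)$, then $N'$ keeps a constant sign on the whole interval, and combined with $N(R_0)=N(R_1)=0$ this forces $N\equiv 0$, i.e.\ $u$ is constant. Substituting a nonzero constant into the radial ODE yields $\Phi^2/r^2=\mu_0^N$ for every $r\in(R_0,R_1)$, which is impossible when $\Phi\neq 0$; this is exactly where the hypothesis $\Phi>0$ is used. Hence $r_*\in(R_0,R_1)$, and the sign of $N'$ shows that $N$ strictly increases from $0$ at $R_0$ to a positive maximum at $r_*$ and then strictly decreases back to $0$ at $R_1$. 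This gives $N>0$, and therefore $u'>0$, throughout $(R_0,R_1)$, completing the monotonicity claim.
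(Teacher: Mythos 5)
Your proposal is correct and follows essentially the same route as the paper: reduce to the radial mode via $V_m\geq V_0$, pass to $|f|$ for positivity, and then show $N(r)=ru'(r)>0$ by proving $r_*=\Phi/\sqrt{\lambda}$ must lie in $(R_0,R_1)$ (otherwise $N'$ has one sign, contradicting $N(R_0)=N(R_1)=0$), so that $N$ increases to its maximum at $r_*$ and decreases back to zero. Your extra observations (positivity up to the endpoints via ODE uniqueness, and the explicit use of $\Phi\neq 0$ to rule out a constant $u$) are harmless refinements of the same argument.
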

\begin{proof}
    Following the same arguments as in the proof of Lemma \ref{lem:radial-u}, we get that there exists a (real) ground state $u$, which is positive on $(R_0,R_1)$ and radial. 

    Now, we show that $u'>0$ on $(R_0,R_1).$ Since $u$ is radial, it satisfies the following equation on $(R_0,R_1)$:
    \begin{equation}\label{eq:radial-u-N}
    \left\{\begin{aligned}
        -u''-\frac{1}{r}u'+\frac{\Phi^2}{r^2}u &=\lambda u\;\;\text{in}\; (R_0,R_1),\\
        u'(R_0)=0,\; & u'(R_1)=0,
    \end{aligned}\right.
    \end{equation}
    where $\lambda=\la^{N,\Om_0^*}(\Phi)$.
    Define $N(r)=ru'(r)$ for $r\in (R_0,R_1).$ It is sufficient to showing that $N$ is positive on $(R_0,R_1).$ Notice that $N(R_0)=N(R_1)=0.$ We have 
    \[N'(r)=\left(\frac{\Phi^2}{r^2}-\lambda\right)ru(r)\quad\mbox{for }r\in (R_0,R_1).\]
    Let $r_*=\Phi/\sqrt{\lambda}.$ We claim that $r_*\in (R_0,R_1).$ Otherwise, we have two cases: either $N'(r)>0$ or $N'(r)<0.$ In the former case, we get by using the boundary conditions that $0=N(R_0)<ru'(r)<N(R_1)=0,$ in $(R_0,R_1),$ which is impossible. Similarly, we exclude the latter case also. Thus we have $r_*\in (R_0,R_1),$ and this is the only point of maximum of $N$ on $(R_0,R_1).$ It is easy to observe that $N$ is strictly increasing on $(R_0,r_*)$ and strictly decreasing on $(r_*,R_1).$ Therefore, we have $ru'(r)>0$ for $r\in (R_0,R_1).$ Consequently, we get $u'>0$ in $(R_0,R_1).$
\end{proof}

\begin{remark}\label{rem:mono-grad-u-N}    Let $u$ be a (real) radial ground state of $\la^{N,\Om_0^*}(\Phi)$. Assume that $\Phi\in (0,1/2]$.  Define 
    \[U(r)=|u'(r)|^2+\frac{\Phi^2}{r^2}u(r)^2,\;\text{ for }\;r\in (R_0,R_1).\]
    Then using the monotonicity of $u$ derived in Lemma~\ref{lem:radial-u-N}, one can easily prove (see Lemma~\ref{lem:mono-grad-u}) that $U$ is strictly decreasing on $(R_0,R_1).$
\end{remark}

 Recall that $\Om_0=\Om\setminus \overline{\om}$ and $\Om_0^*=\Om^*\setminus \overline{\om^*}$ satisfying \eqref{eq:constraint} and \eqref{eq:cond-geom}. Without loss of generality, we assume that $\mathbf 0\in \om$ and hence, we take $\om=\om^*$. 

Let $u$ be the (real) radial ground state of $\la^{N,\Om_0^*}(\Phi)$ given by Lemma \ref{lem:radial-u-N}. We write $u(x)=u(r)$ for brevity. Define the function $f:\R^2\setminus \overline{\om^*}\rightarrow\R$ by 
\begin{equation}\label{eq:def-trial-state-N}
    f(r)=\begin{cases}
        u(r) &\text{if}\;R_0<r<R_1,\\
        u(R_1) &\text{if}\;r\geq R_1.
    \end{cases}
\end{equation}
Observe that $f|_{\Om_0}$ is a valid trial state in the variational characterization of $\la^{N,\Om_0^*}(\Phi)$ in \eqref{LaOmo_N}.  With this trial state $f$, we now prove the following estimate about the $L^2$ norm.

\begin{lemma}\label{lem:proof-thm-N}
    Let $f$ be as in \eqref{eq:def-trial-state-N}. Then, we have
    \[\|f\|_{L^2(\Om_0)}^2\geq\|u\|_{L^2(\Om_0^*)}^2.\]
    Moreover, the above inequality is strict if $\Om_0$ is not an annulus.
\end{lemma}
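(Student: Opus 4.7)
The plan is to mirror the proof of Lemma \ref{lem:proof-thm-2}, with minor adjustments to account for the fact that the trial state $f$ is no longer required to vanish on $\om^*$ (there is no Dirichlet condition on the inner boundary). The two key ingredients are the same: the radial monotonicity of $u$ from Lemma \ref{lem:radial-u-N}, and the area constraint \eqref{eq:constraint} together with the normalization $\om=\om^*$.

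First I would decompose $\Om_0$ as the disjoint union $(\Om_0\cap\Om_0^*)\sqcup(\Om_0\cap(\Om_0^*)^c)$. Since $\om=\om^*\subset\Om$ and $\Om_0=\Om\setminus\overline{\om^*}$ already excludes $\om^*$, the second piece simplifies to $\Om_0\cap(\Om^*)^c$, on which $r\geq R_1$ and hence $f(r)=u(R_1)$. On the first piece $R_0<r<R_1$, so $f(r)=u(r)$. This yields
\[\|f\|_{L^2(\Om_0)}^2=\int_{\Om_0\cap\Om_0^*}u^2\,\dx+u(R_1)^2\,|\Om_0\cap(\Om^*)^c|.\]
Symmetrically I would write
\[\|u\|_{L^2(\Om_0^*)}^2=\int_{\Om_0\cap\Om_0^*}u^2\,\dx+\int_{\Om_0^c\cap\Om_0^*}u^2\,\dx,\]
and observe that $\Om_0^c\cap\Om_0^*=\Om^c\cap\Om_0^*$ (again using $\om=\om^*\subset\Om$, so that $\om^*$ contributes nothing to $\Om_0^c\cap\Om_0^*$).

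The comparison is then immediate. Radial monotonicity $u\leq u(R_1)$ from Lemma \ref{lem:radial-u-N} gives $\int_{\Om^c\cap\Om_0^*}u^2\,\dx\leq u(R_1)^2\,|\Om^c\cap\Om_0^*|$, while the area constraint $|\Om|=|\Om^*|$ combined with the common hole $\om=\om^*$ implies $|\Om_0\cap(\Om^*)^c|=|\Om^c\cap\Om_0^*|$. Subtracting the common integral $\int_{\Om_0\cap\Om_0^*}u^2\,\dx$ yields $\|f\|_{L^2(\Om_0)}^2\geq\|u\|_{L^2(\Om_0^*)}^2$. If $\Om_0$ is not an annulus, then $\Om\neq\Om^*$, so $|\Om^c\cap\Om^*|>0$; on that set $r<R_1$ strictly, and the \emph{strict} monotonicity from Lemma \ref{lem:radial-u-N} gives $u<u(R_1)$, turning the inequality into a strict one. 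I do not foresee a substantive obstacle: the argument is essentially identical to that of Lemma \ref{lem:proof-thm-2}, since the absence of a Dirichlet constraint enters only through the definition of $f$ on $\om^*$ (a set lying outside $\Om_0$) and plays no role in the $L^2$ comparison.
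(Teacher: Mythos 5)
Your proof is correct and follows essentially the same route as the paper: decompose $\Om_0$ and $\Om_0^*$ over $\Om_0\cap\Om_0^*$ and its complement, use $f=u(R_1)$ outside $\Om^*$, then combine the monotonicity $u\leq u(R_1)$ from Lemma~\ref{lem:radial-u-N} with the area identity $|(\Om_0^*)^c\cap\Om_0|=|\Om_0^c\cap\Om_0^*|$ coming from \eqref{eq:constraint} and $\om=\om^*$. Your strictness argument via $u<u(R_1)$ on $\Om^c\cap\Om_0^*$ matches the mechanism the paper uses in the Dirichlet analogue (Lemma~\ref{lem:proof-thm-2}), so no further comment is needed.
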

\begin{proof}
     We decompose the integral over $\Om_0$ as follows:
    \[
\|f\|_{L^2(\Om_0)}^2 =\int_{\Om_0^*\cap \Om_0}f^2\d x+\int_{(\Om_0^*)^c\cap\Om_0}f^2\dx.
    \]
Since $\om=\om^*$, we have    $f=u(R_1)$ on $(\Om^*_0)^c\cap\Om_0$. Thus we get
    \[\|f\|_{L^2(\Om_0)}^2=\int_{\Om^*_0\cap \Om_0}u^2\d x+u(R_1)^2|(\Om^*_0)^c\cap\Om_0|.\]
    Similarly, decomposing the integral over $\Om_0^*$, one has
    \[\|u\|_{L^2(\Om^*_0)}^2=\int_{\Om_0^*\cap \Om_0}u^2\d x+\int_{\Om^*_0\cap \Om_0^c}u^2\d x.\]
To finish the proof, we use that  $ u\leq u(R_1)$ on $(R_0,R_1)$ in view of Lemma \ref{lem:radial-u-N}, and that $|(\Om_0^*)^c\cap\Om_0|=|\Om^*_0\cap \Om_0^c|$ in view of \eqref{eq:constraint}.
\end{proof}

We estimate the magnetic gradient norm of $f$ in a similar fashion to Lemma~\ref{lem:proof-thm-3}.
\begin{lemma}\label{lem:Neu-2}
    Let $f$ be as defined in \eqref{eq:def-trial-state-N}. Then the following holds:
    \[\|(-\i\nabla -\FAB_{\Phi})f\|^2_{L^2(\Om_0)}\leq \|(-\i\nabla -\FAB_{\Phi})u\|^2_{L^2(\Om_0^*)}.\]
\end{lemma}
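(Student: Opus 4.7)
The plan is to mimic the proof of Lemma~\ref{lem:proof-thm-3} almost verbatim, exploiting the fact that the only differences are that $f$ is now defined without a Dirichlet cut-off inside $\om^*$, and that the monotonicity result for $U(r)=|u'(r)|^2+\Phi^2 r^{-2}u(r)^2$ used here comes from Remark~\ref{rem:mono-grad-u-N} rather than Lemma~\ref{lem:mono-grad-u}.

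First, since $f$ is radial, a direct computation gives
\[
\|(-\i\nabla -\FAB_{\Phi})f\|^2_{L^2(\Om_0)}
=\int_{\Om_0}\Bigl(|\nabla f|^2+\frac{\Phi^2}{r^2}f^2\Bigr)\dx.
\]
Because $\om=\om^*$, the set $\Om_0\cap(\Om_0^*)^c$ is contained in $\{r\geq R_1\}$, where $f\equiv u(R_1)$ is constant and $\nabla f=0$. Splitting the integral over $\Om_0=(\Om_0\cap\Om_0^*)\cup(\Om_0\cap(\Om_0^*)^c)$, I obtain
\[
\|(-\i\nabla -\FAB_{\Phi})f\|^2_{L^2(\Om_0)}
=\int_{\Om_0\cap\Om_0^*}U(r)\dx+u(R_1)^2\int_{\Om_0\cap(\Om_0^*)^c}\frac{\Phi^2}{r^2}\dx.
\]
The analogous decomposition applied to $\|(-\i\nabla -\FAB_{\Phi})u\|^2_{L^2(\Om_0^*)}=\int_{\Om_0^*}U(r)\dx$ is
\[
\|(-\i\nabla -\FAB_{\Phi})u\|^2_{L^2(\Om_0^*)}
=\int_{\Om_0\cap\Om_0^*}U(r)\dx+\int_{\Om_0^c\cap\Om_0^*}U(r)\dx.
\]

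Next, I would compare the two remaining integrals. On $\Om_0\cap(\Om_0^*)^c\subset\{r\geq R_1\}$ we have $r^{-2}\leq R_1^{-2}$, so
\[
u(R_1)^2\int_{\Om_0\cap(\Om_0^*)^c}\frac{\Phi^2}{r^2}\dx
\leq \frac{\Phi^2}{R_1^2}u(R_1)^2\,|\Om_0\cap(\Om_0^*)^c|.
\]
On $\Om_0^c\cap\Om_0^*\subset(R_0,R_1)$, Remark~\ref{rem:mono-grad-u-N} gives $U(r)\geq U(R_1)=\Phi^2 R_1^{-2}u(R_1)^2$, hence
\[
\int_{\Om_0^c\cap\Om_0^*}U(r)\dx
\geq \frac{\Phi^2}{R_1^2}u(R_1)^2\,|\Om_0^c\cap\Om_0^*|.
\]
Finally, the area constraint \eqref{eq:constraint} implies $|\Om_0|=|\Om_0^*|$, which in turn yields $|\Om_0\cap(\Om_0^*)^c|=|\Om_0^c\cap\Om_0^*|$. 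Combining the last three displays closes the inequality.

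The argument is essentially a bookkeeping exercise, and there is no serious obstacle: every ingredient (the radiality of $f$, the vanishing of $\nabla f$ outside the disk of radius $R_1$, the pointwise lower bound on $U$, and the equi-measure identity) has already been set up. The only small care needed, compared with the Dirichlet case, is checking that the absence of a Dirichlet cut-off on $\{r\leq R_0\}$ does not produce additional contributions; this is automatic because $\Om_0\subset\R^2\setminus\overline{\om^*}$ and $f$ is defined only there.
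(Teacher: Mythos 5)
Your proposal is correct and is essentially the paper's own argument: the paper proves this lemma simply by repeating the decomposition of Lemma~\ref{lem:proof-thm-3}, using $r^{-2}\leq R_1^{-2}$ on $\Om_0\cap(\Om_0^*)^c\subset\{r\geq R_1\}$, the monotonicity of $U$ from Remark~\ref{rem:mono-grad-u-N} (together with $u'(R_1)=0$, so $U(R_1)=\Phi^2R_1^{-2}u(R_1)^2$), and the equality of measures $|\Om_0\cap(\Om_0^*)^c|=|\Om_0^c\cap\Om_0^*|$ coming from \eqref{eq:constraint} with $\om=\om^*$. Your added remark that $\Om_0\cap(\Om_0^*)^c$ lies entirely in $\{r\geq R_1\}$ because $\Om_0$ avoids $\overline{\om^*}$ is exactly the point that makes the Neumann case go through unchanged.
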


Using Lemma \ref{lem:proof-thm-N} and Lemma \ref{lem:Neu-2} in \eqref{LaOmo_N}, we have the following theorem.

\begin{theorem}\label{theo:Neu}
    Let $\Om_0$ and $\Om_0^*$ be as given in \eqref{eq:def-opt-dom}, and suppose that \eqref{eq:constraint} and \eqref{eq:cond-geom} hold. If $\Om_0$ is not an annulus, then for a given $\Phi\in (0,1/2]$, we have 
    \begin{equation}
        \la^{N,\Om_0}(\Phi)<\la^{N,\Om_0^*}(\Phi).
    \end{equation}
\end{theorem}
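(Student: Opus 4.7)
The plan is to follow the trial-function strategy used in the proof of Theorem \ref{theo:FB}, now adapted to the Neumann variational problem \eqref{LaOmo_N}. After translation we may assume $\mathbf 0\in\om$ and, by \eqref{eq:cond-geom}, $\om=\om^*$, which makes the function $f$ defined in \eqref{eq:def-trial-state-N} belong to $H^1(\Om_0)$. Because \eqref{LaOmo_N} imposes no boundary condition, $f|_{\Om_0}$ is immediately admissible in the Rayleigh quotient, so
\[
\la^{N,\Om_0}(\Phi) \leq \frac{\|(-\i\nabla - \FAB_\Phi)f\|^2_{L^2(\Om_0)}}{\|f\|^2_{L^2(\Om_0)}}.
\]

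To close the proof I would invoke Lemma \ref{lem:proof-thm-N} for the denominator (with strict inequality whenever $\Om_0$ is not an annulus) and Lemma \ref{lem:Neu-2} for the numerator. Chaining these gives
\[
\la^{N,\Om_0}(\Phi) \leq \frac{\|(-\i\nabla - \FAB_\Phi)u\|^2_{L^2(\Om_0^*)}}{\|u\|^2_{L^2(\Om_0^*)}} = \la^{N,\Om_0^*}(\Phi),
\]
and the strictness transfers from Lemma \ref{lem:proof-thm-N} into the final inequality in the non-annular case. The whole reduction is therefore essentially a one-line consequence of the two auxiliary lemmas.

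The only piece that still needs content is Lemma \ref{lem:Neu-2}, which I would prove in complete parallel with Lemma \ref{lem:proof-thm-3}. The plan is to decompose both $\Om_0$ and $\Om_0^*$ into the common part $\Om_0\cap\Om_0^*$, on which the integrands coincide and cancel, and the two non-overlap pieces $(\Om_0^*)^c\cap\Om_0$ and $\Om_0^c\cap\Om_0^*$. On $(\Om_0^*)^c\cap\Om_0$ one has $r\geq R_1$ and $\nabla f = 0$, so the pointwise integrand equals $\Phi^2 u(R_1)^2/r^2$ and is bounded above by $\Phi^2 u(R_1)^2/R_1^2$. On $\Om_0^c\cap\Om_0^*$ one has $R_0<r<R_1$ and the integrand equals the function $U(r)$ of Remark \ref{rem:mono-grad-u-N}, which by its strict decrease satisfies $U(r)\geq U(R_1)=\Phi^2 u(R_1)^2/R_1^2$. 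The equal-area identity $|(\Om_0^*)^c\cap\Om_0|=|\Om_0^c\cap\Om_0^*|$ supplied by \eqref{eq:constraint} then tips the balance in the correct direction.

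I do not foresee a serious obstacle. The one sensitive hypothesis is $\Phi\in(0,1/2]$, but it is exactly what makes the monotonicity of $U$ work in Remark \ref{rem:mono-grad-u-N} (through $1-\Phi^2>0$) and also what allowed Lemma \ref{lem:radial-u-N} to single out a positive, radial, strictly increasing ground state; both of these inputs are already in place. Everything else is a bookkeeping exercise on the symmetric difference of $\Om_0$ and $\Om_0^*$.
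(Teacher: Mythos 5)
Your proposal is correct and follows essentially the same route as the paper: the paper also takes the radial Neumann ground state $u$ of Lemma \ref{lem:radial-u-N}, extends it by the constant $u(R_1)$ as in \eqref{eq:def-trial-state-N}, and combines Lemma \ref{lem:proof-thm-N} with Lemma \ref{lem:Neu-2} (the latter proved exactly by the symmetric-difference decomposition and the monotonicity of $U$ from Remark \ref{rem:mono-grad-u-N}, as you outline). Nothing in your argument deviates from, or is missing relative to, the paper's proof.
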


\begin{remark}
    Observe that  Theorem \ref{theo:Neu} extends to all $\Phi\not\in \mathbb{Z}$ due to periodicity and even symmetry of $\la^{N,\Om_0}(\Phi)$ with respect to $\Phi$ (see Section \ref{sec:Main}). For $\Phi\in\Z$, $\la^{N,\Om_0}(\Phi)$ vanishes because we reduce to the case of the Neumann Laplacian without magnetic field.
\end{remark}

\subsection*{Acknowledgments} The authors would like to thank Vladimir Lotoreichik for the valuable comments. AK is partially supported by CUHK-SZ grant no. UDF01003322. MG is grateful to TIFR Centre for Applicable Mathematics (TIFR-CAM) for the financial support and for providing various academic resources.

\bibliographystyle{abbrvurl}
\bibliography{Reference}
\end{document}